\theoremstyle{plain}
\newtheorem{lemma}{Lemma}[section]
\newtheorem{proposition}[lemma]{\textbf{Proposition}}
\newtheorem{theorem}[lemma]{\textbf{Theorem}}
\newtheorem{corollary}[lemma]{\textbf{Corollary}}
\theoremstyle{definition}
\newtheorem{definition}[lemma]{\textbf{Definition}}
\newcommand{\R}{\mathbb{R}}
\newcommand{\C}{\mathbb{C}}
\newcommand{\p}{\mathbb{P}}
\newcommand{\South}{{\bm{s}}}
\newcommand{\North}{{\bm{n}}}
\newcommand{\vertOne}{{\bm{w}_1}}
\newcommand{\vertTwo}{{\bm{w}_2}}
\newcommand{\Moebius}{M}
\newcommand{\Fin}{\operatorname{Fin}}
\newcommand{\projcolon}{\!:\!}
\newcommand{\Econst}{E_\mathrm{const}}
\newcommand{\Ered}{\Econst^\mathrm{red}}
\newcommand{\aug}{\mathrm{aug}}
\newcommand{\Gaug}{G_\aug}
\newcommand{\Eaug}{E_\aug}
\newcommand{\Waug}{W^\aug}
\newcommand{\Yaug}{Y^\aug}
\newcommand{\Zaug}{Z^\aug}
\colorlet{ecol}{black!50!white}
\definecolor{colR}{rgb}{.932,.172,.172} %x11 Firebrick2
\definecolor{colB}{rgb}{.255,.41,.884} %svgnames RoyalBlue
\colorlet{colG}{Goldenrod2}
\colorlet{col1}{DarkGoldenrod}
\colorlet{col2}{DarkKhaki}
\colorlet{col3}{MediumPurple}
\colorlet{col4}{IndianRed}
\colorlet{col5}{SkyBlue!90!black}
\colorlet{col6}{MediumSeaGreen}
\tikzstyle{vertex}=[circle, draw, fill=black, inner sep=0pt, minimum size=4pt]
\tikzstyle{svertex}=[circle, draw, fill=black, inner sep=0pt, minimum size=2pt] % small figures
\tikzstyle{edge}=[line width=1.5pt,ecol]
\tikzstyle{redge}=[edge,colR]
\tikzstyle{bedge}=[edge,colB]
\tikzstyle{labelsty}=[font=\scriptsize]
\tikzstyle{dedge}=[%directed edge
\tikzstyle{sdedge}=[%directed edge small figuresif
\title{Zero-sum cycles in flexible non-triangular polyhedra}
\date{}
\author{%
Matteo Gallet
\and
Georg Grasegger
\and
Jan Legersk\'y
\and
Josef Schicho
}
\begin{document}

\maketitle

\begin{abstract}
 Finding necessary conditions for the geometry of flexible polyhedra is a classical problem that has received attention also in recent times.
 For flexible polyhedra with triangular faces, we showed in a previous work the existence of cycles with a sign assignment for their edges, 
 such that the signed sum of the edge lengths along the cycle is zero. 
 In this work, we extend this result to flexible non-triangular polyhedra.
\end{abstract}

\section*{Introduction}

This work is a generalization of the results in \cite{Gallet2021}.
There, we considered the case of flexible triangular polyhedra, namely polyhedra whose faces are triangles.
We showed that, for each edge whose dihedral angle changes during the flex, there exists a cycle
passing through this edge and a sign assignment for the cycle edges such that the signed sum
of their lengths is zero. More precisely, we proved the following result.

\begin{theorem}
	Consider a polyhedron with triangular faces that admits a flex,
	i.\,e., a continuous deformation preserving the shapes of all faces.
	Let $\{ \vertOne, \vertTwo \}$ be an edge, and let $\South$ and $\North$ be the two vertices adjacent to both~$\vertOne$ and~$\vertTwo$. 
	If the dihedral angle between the faces $\{\vertOne,\vertTwo, \South \}$ and $\{\vertOne,\vertTwo, \North\}$ is not constant along the flex,
	then there is an induced cycle of edges containing $\{ \vertOne, \vertTwo \}$ but neither the vertex~$\South$ nor~$\North$ 
	and there is a sign assignment such that the signed sum of
	lengths of the edges in the cycle is zero.
\end{theorem}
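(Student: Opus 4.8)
The plan is to complexify the flex and study it through a single carefully chosen place of the resulting algebraic curve. Since the faces are triangles, preserving their shapes is equivalent to preserving all edge lengths, so a flex is a real one-parameter family of realizations $p\colon V\to\R^3$ with $\langle p_i-p_j,\,p_i-p_j\rangle=\ell_{ij}^2$ constant for every edge $\{i,j\}$, where $\langle\cdot,\cdot\rangle$ denotes the standard bilinear form. Replacing $\R^3$ by $\C^3$ and taking the Zariski closure of this family yields an algebraic curve $C$; equivalently, passing to the Möbius/light-cone model one records each vertex as a null vector and rewrites the constraints as $\sprod{P_i}{P_j}=-\tfrac12\ell_{ij}^2$, constant along $C$. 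Because translations act as trivial flexes, I would work modulo translations and regard the vertex positions as rational maps from the smooth projective model of $C$ to $\C^3$.

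The first real step is to understand the behaviour at a place $\mathfrak p$ at infinity of $C$. The decisive observation is that, although the bilinear edge lengths stay bounded, a vertex may still run to infinity in $\C^3$ provided the incident edge vectors become isotropic: if $t$ is a uniformizer at $\mathfrak p$ and $p_i\sim t^{-m_i}v_i$ with $m_i>0$, then $\langle p_i-p_j,\,p_i-p_j\rangle=\ell_{ij}^2$ forces the leading directions $v_i$ of the fastest-growing vertices to lie on the isotropic conic $\Omega=\{x^2+y^2+z^2=0\}$ and, along each edge joining two such vertices, to coincide. Thus the vertices of maximal pole order split into clusters, each carrying a single common isotropic direction $v_\ast$ (in Möbius terms, approaching the point at infinity of the Möbius quadric).

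Next I would extract the zero-sum relation from the sub-leading data inside one cluster. Parametrizing $\Omega$ by $s\mapsto v_s=(1-s^2,\,i(1+s^2),\,2s)$ one computes $\langle v_s,v_{s'}\rangle=-2(s-s')^2$, so for two cluster vertices $i,j$ with $p_i\approx R\,v_{s_i}$, $p_j\approx R\,v_{s_j}$ and common scale $R\sim t^{-m}$ the edge-length constraint reads $4R^2(s_i-s_j)^2\to\ell_{ij}^2$. Hence the rescaled parameters $\widehat s_i:=\lim 2R\,(s_i-s_\ast)$ exist and satisfy $\widehat s_i-\widehat s_j=\pm\,\ell_{ij}$ on every edge of the cluster. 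The function $\widehat s$ is therefore a genuine ``height'' on the cluster, and telescoping around any cycle $\cycle$ contained in it gives immediately $\sum_{\{i,j\}\in\cycle}\sigma_{ij}\,\ell_{ij}=0$, with signs $\sigma_{ij}=\pm1$ read off from $\widehat s$. This already proves the zero-sum statement for any cycle lying in a single cluster.

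It remains---and this is the crux---to produce a place whose maximal cluster contains the edge $\{\vertOne,\vertTwo\}$ together with a cycle that separates $\South$ from $\North$ while excluding both. Here I would use the hypothesis that the dihedral angle at $\{\vertOne,\vertTwo\}$ is non-constant: viewing a suitable trigonometric function of this angle as a non-constant rational function on $C$, I would choose $\mathfrak p$ to be a place where it degenerates, so that the blow-up is organized around the hinge and the two rigid triangles $\{\vertOne,\vertTwo,\South\}$ and $\{\vertOne,\vertTwo,\North\}$ fold to opposite sides. Concretely one wants $\vertOne,\vertTwo$ of maximal pole order with a common isotropic direction, while $\South$ and $\North$ fall into the two different connected regions cut out on the polyhedral sphere by the boundary of the cluster; taking a minimal such separating cycle then yields an induced cycle through $\{\vertOne,\vertTwo\}$ avoiding $\South$ and $\North$, and the previous paragraph finishes the proof. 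The main obstacle is exactly this selection: controlling the combinatorics of the cluster well enough to guarantee that the separating cycle exists, is induced, and splits $\South$ from $\North$. This is where the non-constant dihedral angle must be converted into genuine combinatorial separation, and where the triangularity of the faces is used to keep the configuration variety equal to the edge-length variety.
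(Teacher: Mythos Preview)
Your analytic core --- complexifying, passing to a degenerate configuration, and extracting a one-dimensional height function $\hat s$ on a distinguished set of vertices so that $\hat s_i-\hat s_j=\pm\ell_{ij}$ along edges --- is precisely the mechanism the paper uses: it is the content of Lemma~\ref{lemma:cycle_fin}, phrased via the set $\Fin_p$ on the M\"obius quadric. The discrepancy is in how the distinguished vertex set is produced, and that is exactly where your proposal has the gap you yourself flag.

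The paper (recalling \cite{Gallet2021} in Section~\ref{preliminaries}) does \emph{not} send $\vertOne,\vertTwo$ to infinity. It fixes representatives modulo the full isometry group by \emph{pinning the entire triangle} $\{\vertOne,\vertTwo,\North\}$ to its initial position. This forces $\rho_\infty(\vertOne),\rho_\infty(\vertTwo),\rho_\infty(\North)$ to remain in the affine chart (they are ``red''), and the non-constancy of the dihedral angle then makes the projection to the $\South$-coordinate positive-dimensional, so one can choose $\rho_\infty$ with $\rho_\infty(\South)\in\Moebius_\infty$ (making $\South$ ``blue''). The zero-sum cycle lives in the \emph{red} walk --- finite vertices whose images lie in $\Fin_{\rho_\infty(\South)}$ --- and by construction it contains $\{\vertOne,\vertTwo\}$ and excludes $\South$ automatically. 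The exclusion of $\North$ and inducedness are then a purely combinatorial step (\cite[Lemma~2.10]{Gallet2021}); the pinning has already placed the four key vertices on the correct sides.

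Your setup inverts this: you want $\vertOne,\vertTwo$ in the maximal-pole-order cluster and $\South,\North$ outside it. But $\South$ and $\North$ are each adjacent to both $\vertOne$ and $\vertTwo$ with bounded edge lengths, so there is no a priori reason they should have strictly smaller pole order or a different isotropic direction; nothing prevents them from being dragged into the same cluster. Saying that the place is chosen so that ``the dihedral angle degenerates'' does not by itself break the symmetry between $\South$ and $\North$, nor place them outside the cluster. The paper's pinning trick sidesteps exactly this difficulty by imposing the asymmetry by decree (three designated vertices finite, the fourth sent to infinity) and then letting the red/blue triangle combinatorics propagate it into a walk. Without an analogous device, the ``main obstacle'' you name is a genuine gap rather than a routine verification.
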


Here, we generalize this theorem to non-triangular flexible polyhedra.
To do so, we first prove that in the triangular case 
the cycle can be chosen so that it only passes through edges 
whose dihedral angle changes during the flex (\Cref{proposition:avoid}). 
Once we have this, it is enough to triangulate the faces of the polyhedron
to obtain the final result.

We refer to \cite{Gallet2021} for a description of the background of the problem,
related results, and the corresponding bibliography.
Moreover, we rely on \cite{Gallet2021} for several results that are used in this paper.
\Cref{preliminaries} recalls all important aspects of \cite{Gallet2021} and can be skipped when the reader is familiar with that paper.
The main result of this paper (\Cref{theorem:nontriangular}) is proven in \Cref{cycles}.

\section{Preliminaries}
\label{preliminaries}

We recall the main definitions and results from \cite{Gallet2021} about flexes of polyhedra.

\begin{definition}
	Let $G = (V, E)$ be the $1$-skeleton of a triangular polyhedron. 
	\begin{itemize}
	\item
	A \emph{realization} of the polyhedron is a map $\rho \colon V \longrightarrow \R^3$ with $\rho(u) \neq \rho(v)$
	for every $\{u, v\} \in E$.
	\item
	We define \emph{edge lengths} induced by a given a realization~$\rho$ by $\lambda_{\{i,j\}} := \left\| \rho(i) - \rho(j) \right\| \in \R_{>0}$ for $\{i,j\} \in E$.
	\item
	We say that two realizations~$\rho_1$ and~$\rho_2$ are \emph{congruent} 
	if there exists an isometry~$\sigma$ of~$\R^3$ such that $\rho_1 = \sigma \circ \rho_2$.
	\item
	A \emph{flex} of a realization~$\rho$ is a continuous map
	$f \colon [0, 1) \longrightarrow (\R^3)^V$ such that 
	\begin{itemize}
	\item
	 $f(0)$ is the given realization~$\rho$;
	\item
	 for any $t \in [0,1)$, the realizations $f(t)$ and $f(0)$ induce the same edge lengths;
	\item
	 for any two distinct $t_1, t_2 \in [0,1)$, the realizations~$f(t_1)$ and~$f(t_2)$ are not congruent.
	\end{itemize}
	\end{itemize}
\end{definition}

To understand the proofs in the next section,
we need to recall the setting from~\cite{Gallet2021}.
We briefly introduce the notions and the results,
so we refer to \cite{Gallet2021} for a more precise account.
From now on, we fix $G=(V,E)$ to be the 1-skeleton of a triangular polyhedron.
We suppose that there is a realization~$\rho_0$ that admits a flex, 
and that the dihedral angle at the edge $\{\vertOne, \vertTwo\}$ changes during the flex.
The two neighbors of $\vertOne$ and $\vertTwo$ are the vertices $\South$ and $\North$.

We encode realizations of the polyhedron as follows. 
We consider the three-dimensional variety $\Moebius \subset \p^4$ defined by the equation
\begin{equation*}
  x^2+y^2+z^2-rh = 0 \,.
\end{equation*}
The variety $\Moebius$ contains a copy of $\R^3$, namely the image of $(x,y,z) \mapsto (x \projcolon y \projcolon z \projcolon x^2+y^2+z^2 \projcolon 1)$.
Hence, a realization of the polyhedron is given by an element in $\Moebius^V$.
Since $\Moebius$ is an algebraic variety, we can consider its extension to the complex numbers.
From now on, we also take into account complex realizations of the polyhedron.

Given a vector of edge lengths $\lambda$, we consider the complex extension of the variety of real realizations inducing $\lambda$.
Namely, we define $W$ to be the set of maps $\rho \colon V \longrightarrow \C^3$ such that, if we write $\rho(v_i) = (x_i, y_i, z_i)$,
\begin{equation*}
  (x_1 - x_2)^2 + (y_1 - y_2)^2 + (z_1 - z_2)^2 = \lambda^2_{\{v_1, v_2\}}
\end{equation*}
for all $\{v_1, v_2\} \in E$.
Whenever a realization belongs to $W$ then all its congruent ones also belong to $W$.
To select representatives for each congruence class in $W$, 
we pin the triangle $\vertOne,\vertTwo,\North$ to the initial realization~$\rho_0$
and so we define
\begin{equation*}
  Z := 
 \bigl\{
  \rho \in W \, | \,
  \rho(\vertOne) = \rho_0(\vertOne), \;
  \rho(\vertTwo) = \rho_0(\vertTwo), \;
  \rho(\North) = \rho_0(\North)
 \bigr\}
 \,. 
\end{equation*}
Then no two elements in this new set $Z$ differ by a direct isometry.
We define~$Y$ to be the image of $Z$ under the embedding $(\C^3)^V \longrightarrow \Moebius^V$.
The existence of a flex implies that $Y$ has positive dimension.
Since we are assuming that the dihedral angle at the edge $\{\vertOne,\vertTwo\}$ changes during the flex,
the projection $Y_\South$ of~$Y$ onto the $\South$-coordinate of $\Moebius^V$ is positive-dimensional.
Therefore, $Y_\South$ intersects the hyperplane $\{h=0\}\subset \p^4$ non-trivially.
The main argument in \cite{Gallet2021} relies on fixing an element $\rho_\infty \in Y$ 
such that $\rho_\infty(\South) \in Y_\South \cap \{h=0\}$.
Here, having extended the setting to the complex numbers proves to be crucial.

We define $\Moebius_\infty$ to be $\Moebius\cap \{h=0\}$.
Then $\rho_\infty(\South)$ belongs to $\Moebius_\infty$.
Using $\rho_\infty$ one defines a coloring of the vertices of $G$ (see \cite[Definition 2.6]{Gallet2021}),
where a vertex $v \in V$ is colored:
\begin{itemize}
 \item \emph{red} if $\rho_{\infty}(v) \in \Moebius \setminus \Moebius_{\infty}$;
 \item \emph{blue} if $\rho_{\infty}(v) = (x_{\South}\projcolon y_{\South}\projcolon z_{\South}\projcolon r_v\projcolon 0)$ 
 where $\rho_{\infty}(\South) = (x_{\South}\projcolon y_{\South}\projcolon z_{\South}\projcolon r_{\South}\projcolon 0)$;
 \item \emph{gold} otherwise.
\end{itemize}
By construction, the vertices $\vertOne$, $\vertTwo$, and $\North$ are red, while $\South$ is blue.
From the coloring, we define a \emph{blue walk} and a \emph{red walk} as follows.
We define an equivalence relation on edges with one blue vertex and one red vertex:
two edges are in relation if they belong to the same triangle (and then we take the reflexive-transitive closure). 
Now, the red vertices, respectively the blue vertices, in the equivalence class of $\{\vertOne, \South\}$ 
create a red walk, respectively blue walk. These walks respectively contain $\{\vertOne, \vertTwo\}$ and $\South$.
Within the red walks, it is possible to find a cycle containing $\{\vertOne, \vertTwo\}$ (see \cite[Lemma 2.10]{Gallet2021}).

The importance of these two walks derives from the fact that we can prove (\cite[Lemma 2.9]{Gallet2021})
that for the vertices $v$ in the red walk we have $\rho_{\infty}(v) \in \Fin_{\rho_{\infty}(\South)}$, 
where for $p = (x_p \projcolon y_p \projcolon z_p \projcolon r_p \projcolon h_p) \in \p^4$ we define
\[
 \Fin_p := \{ (x\projcolon y \projcolon z \projcolon r \projcolon h) \in \Moebius \,|\, 
              h \neq 0 \text{ and }  x \, x_p + y \, y_p + z \, z_p - \frac{1}{2}(r \, h_p + h \, r_p) = 0 \} \,.
\]
The set $\Fin_p$ can be also thought as the intersection of $\Moebius \setminus \Moebius_{\infty}$ with the embedded tangent space of $\Moebius$ at $p$.
The remarkable property of $\Fin_p$, when $p \in \Moebius_{\infty}$, is that for its points, 
the analogue of the Euclidean distance behaves like a distance on a one-dimensional space (see \cite[Lemma 1.11]{Gallet2021});
from the proof of \cite[Theorem~2.2]{Gallet2021}, we get the following result, 
which ensures that the zero-sum property holds for cycles of red vertices.

\begin{lemma}
	\label{lemma:cycle_fin}
	Let $\mathcal{D}=(v_1, \dotsc, v_k, v_{k+1} = v_1)$ be a cycle such that
	$\rho_{\infty}(v_j) \in \Fin_{\rho_{\infty}(\South)}$ for all $j\in\{1,\ldots, k\}$.
	There exist $\eta_j\in\{-1,1\}$ for $j\in\{1,\ldots, k\}$ such that 
	 \[
	  \sum_{j=1}^{k} \eta_j \, \lambda_{\{v_j, v_{j+1}\}}=0\,,
	 \]
	 where $\lambda_{\{v_j, v_{j+1}\}}$ are the edge lengths induced by the realizations in the flex.
\end{lemma}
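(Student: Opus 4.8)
The plan is to linearize the edge lengths along the cycle by using the one-dimensional character of the distance on $\Fin_{\rho_\infty(\South)}$ from \cite[Lemma 1.11]{Gallet2021}, and then to close up the cycle with a telescoping sum. Write $p := \rho_\infty(\South)$ and $n := (x_p, y_p, z_p)$ for its spatial part. Since $p \in \Moebius_\infty$ we have $h_p = 0$, and as $p \in \Moebius$ this forces $n \cdot n = x_p^2 + y_p^2 + z_p^2 = 0$, so that $n$ is an isotropic vector of $\C^3$. On the chart $h = 1$ the set $\Fin_p$ is the affine plane
\begin{equation*}
  \Fin_p = \bigl\{\, q \in \C^3 \;:\; q \cdot n = \tfrac12\, r_p \,\bigr\},
\end{equation*}
whose normal $n$ is isotropic and therefore lies in its own orthogonal complement; hence the restriction of the standard quadratic form to this plane is degenerate with a one-dimensional radical spanned by $n$. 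This is precisely why \cite[Lemma 1.11]{Gallet2021} provides a single linear functional $\ell$ on $\Fin_p$ such that the complexified squared Euclidean distance between any two points $q, q' \in \Fin_p$ equals $\bigl(\ell(q) - \ell(q')\bigr)^2$.

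Next I would transport the edge lengths into this coordinate. Set $\ell_j := \ell\bigl(\rho_\infty(v_j)\bigr)$ for $j \in \{1, \dots, k+1\}$, which is legitimate because every $\rho_\infty(v_j)$ lies in $\Fin_p$ by hypothesis and in particular has $h \neq 0$. Since $\rho_\infty \in Y$, it satisfies the (homogenized) edge equations defining $W$; evaluating these at the two endpoints of the edge $\{v_j, v_{j+1}\}$ — both of which I may normalize to the chart $h = 1$ — shows that the squared distance between $\rho_\infty(v_j)$ and $\rho_\infty(v_{j+1})$ equals $\lambda_{\{v_j, v_{j+1}\}}^2$. Combining this with the previous paragraph gives the key identity
\begin{equation*}
  \lambda_{\{v_j, v_{j+1}\}}^2 = \bigl(\ell_j - \ell_{j+1}\bigr)^2
  \qquad (j = 1, \dots, k).
\end{equation*}
As each length $\lambda_{\{v_j, v_{j+1}\}}$ is nonzero, the number $\ell_j - \ell_{j+1}$ is exactly one of the two square roots $\pm \lambda_{\{v_j, v_{j+1}\}}$, and I define $\eta_j \in \{-1, 1\}$ to record which one, so that $\eta_j\, \lambda_{\{v_j, v_{j+1}\}} = \ell_j - \ell_{j+1}$.

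Finally I would sum over the cycle. With the signs chosen above the sum telescopes,
\begin{equation*}
  \sum_{j=1}^{k} \eta_j\, \lambda_{\{v_j, v_{j+1}\}}
    = \sum_{j=1}^{k} \bigl(\ell_j - \ell_{j+1}\bigr)
    = \ell_1 - \ell_{k+1} = 0,
\end{equation*}
where the last equality uses that $\mathcal{D}$ is a cycle, so $v_{k+1} = v_1$ and hence $\ell_{k+1} = \ell_1$. The only substantive ingredient is the collapse of the distance on $\Fin_p$ to the square of a linear functional, i.e.\ \cite[Lemma 1.11]{Gallet2021}; once that is imported, the statement is a one-line telescoping. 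Accordingly, I expect the only point needing care to be the bookkeeping of the $h$-normalizations, namely checking that the edge equations of $W$ recover the distance $\lambda_{\{v_j, v_{j+1}\}}^2$ at the boundary point $\rho_\infty$ despite $\rho_\infty(\South)$ lying on $\{h = 0\}$.
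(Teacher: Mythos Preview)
Your proposal is correct and follows exactly the approach the paper invokes: the lemma is stated here as a direct consequence of the proof of \cite[Theorem~2.2]{Gallet2021}, and what you have written is precisely that argument unpacked---use \cite[Lemma~1.11]{Gallet2021} to replace squared distances on $\Fin_{\rho_\infty(\South)}$ by squares of differences of a single linear coordinate, pick signs accordingly, and telescope around the cycle. Your caveat about the $h$-normalization is well placed but already resolved by your own observation that every $\rho_\infty(v_j)$ lies in $\Fin_{\rho_\infty(\South)}$ and hence has $h\neq 0$, so the affine edge equations apply verbatim to the relevant pairs.
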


\section{Cycles in polyhedra}
\label{cycles}

We start strengthening the main result in \cite{Gallet2021}, 
by showing that if the dihedral angle at one edge changes during a flex,
then it is possible to find a cycle that contains this edge but avoids
all edges whose dihedral angles stay constant (\Cref{proposition:avoid}).
To obtain this, we have to sacrifice the property of the cycle to be induced.
In this way, we can extend our result to polyhedra
that are not necessarily triangular (\Cref{theorem:nontriangular}).

\begin{proposition}
	\label{proposition:avoid}
	Let $G$ be the 1-skeleton of a triangular polyhedron with a realization that admits a flex.
	Suppose that all triangles in the realization are non-degenerate, i.e., the vertices of each triangle are non-collinear.
	Then, for every edge of~$G$ whose dihedral angle changes along the flex,
	there is a cycle in~$G$ containing that edge and
	a sign assignment such that the signed sum of the lengths of the edges in the cycle is zero.
	The cycle can be chosen so that it consists only of edges whose dihedral angle changes along the flex.
\end{proposition}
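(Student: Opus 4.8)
The plan is to combine the reduction supplied by \Cref{lemma:cycle_fin}---which gives the zero-sum property for \emph{any} cycle all of whose vertices lie in $\Fin_{\rho_\infty(\South)}$---with a geometric analysis of which edges can carry a constant dihedral angle. First I would reformulate the dihedral condition metrically. If $\{a,b\}$ is an edge shared by the triangles $\{a,b,p\}$ and $\{a,b,q\}$, then, since the five lengths $\lambda_{\{a,b\}},\lambda_{\{a,p\}},\lambda_{\{b,p\}},\lambda_{\{a,q\}},\lambda_{\{b,q\}}$ are fixed along the flex, a decomposition of $\rho(p)-\rho(q)$ into its component along the line through $a,b$ and its component in the orthogonal plane gives
\[
\lVert \rho(p)-\rho(q)\rVert^2 = c - 2\,d_p\,d_q\cos\alpha,
\]
where $\alpha$ is the dihedral angle at $\{a,b\}$ and $c,d_p,d_q$ are constants. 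The coefficient $-2d_pd_q$ is nonzero precisely because the two triangles are non-degenerate, so $d_p,d_q>0$; as $\cos$ is injective on $(0,\pi)$, the dihedral angle at $\{a,b\}$ is constant along the flex if and only if $\lVert\rho(p)-\rho(q)\rVert$ is constant. This is the first point where the non-degeneracy hypothesis is used.

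Next I would organise the faces into \emph{rigid clusters}: declare two triangles adjacent when they share an edge of constant dihedral angle, and pass to the connected components of the reflexive-transitive closure. By the reformulation, attaching a triangle to a rigid body along a constant-dihedral edge produces a rigid body, so each cluster is rigid, meaning that all pairwise distances between its vertices stay constant during the flex. Consequently every edge internal to a cluster has constant dihedral angle (its opposite vertices lie in the same rigid cluster), while an edge shared by triangles of two different clusters has changing dihedral angle. Since the faces form a closed surface, the clusters decompose it and the boundary of each cluster is a union of closed walks built solely from changing-dihedral edges; in particular the distinguished edge $\{\vertOne,\vertTwo\}$, whose adjacent triangles $\{\vertOne,\vertTwo,\South\}$ and $\{\vertOne,\vertTwo,\North\}$ lie in different clusters, is one such boundary edge.

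The strategy is then to start from the red cycle through $\{\vertOne,\vertTwo\}$ provided by \cite[Lemma 2.10]{Gallet2021}, whose vertices all lie in $\Fin_{\rho_\infty(\South)}$ by \cite[Lemma 2.9]{Gallet2021} and which is therefore zero-sum by \Cref{lemma:cycle_fin}, and to remove its constant-dihedral edges one by one. Each such edge is internal to a cluster $C$ and can in principle be replaced by a detour along $\partial C$, which consists of changing-dihedral edges; this is the step at which the cycle stops being induced, exactly as announced before the statement. The limit realization $\rho_\infty$ controls the colouring of $C$ through the non-degeneracy hypothesis once more: if three non-collinear vertices of a rigid cluster remain finite under $\rho_\infty$, then the cluster is carried by a finite isometry and is entirely red, so $C_\North$ (anchored by the non-collinear pinned vertices $\vertOne,\vertTwo,\North$) is all red; equivalently, as soon as a cluster has a vertex on $\Moebius_\infty$, its red vertices are forced to be collinear. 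This dichotomy determines which vertices of a cluster are red and hence eligible for a detour inside $\Fin_{\rho_\infty(\South)}$.

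The main obstacle is precisely guaranteeing that these detours stay inside $\Fin_{\rho_\infty(\South)}$, where \Cref{lemma:cycle_fin} applies. One must track the colouring along $\partial C$ and show that the portion of the boundary used for rerouting remains within the red walk, for which \cite[Lemma 2.9]{Gallet2021} supplies membership in $\Fin_{\rho_\infty(\South)}$. The collinearity conclusion of the previous paragraph handles a constant-dihedral red-walk edge whose outer triangle has a \emph{finite} third vertex (a non-degenerate triangle of red vertices cannot sit on the degeneration axis, so such an edge is in fact changing); the genuinely delicate case is a red-walk edge flanked on both sides by vertices collapsing to $\Moebius_\infty$, where the collinearity obstruction does not fire and a more careful argument through the cluster structure is required to produce the detour without leaving $\Fin_{\rho_\infty(\South)}$. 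Once the rerouting is shown to preserve $\Fin$-membership, \Cref{lemma:cycle_fin} closes the argument and the resulting cycle, now assembled exclusively from changing-dihedral edges, is the desired one; because the cluster decomposition ignores any chosen triangulation of a face, this is what ultimately enables the passage to \Cref{theorem:nontriangular}.
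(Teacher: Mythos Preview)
Your metric reformulation of the constant-dihedral condition and the rigid-cluster decomposition are both correct and useful observations, but the proof as written has a genuine gap that you yourself flag without closing. The entire force of \Cref{lemma:cycle_fin} rests on every vertex of the cycle lying in $\Fin_{\rho_\infty(\South)}$, and this property is supplied by \cite[Lemma~2.9]{Gallet2021} only for vertices of the \emph{red walk}, not for arbitrary red vertices and certainly not for arbitrary boundary vertices of a rigid cluster. When you propose to replace a constant-dihedral edge $\{v_1,v_2\}$ on the red cycle by a path along $\partial C$, nothing guarantees that the new vertices belong to the red walk (or to $\Fin_{\rho_\infty(\South)}$ by any other route). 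Your dichotomy---three non-collinear finite vertices force the whole cluster to be finite---only controls the colour, not $\Fin$-membership; and in the case you call ``genuinely delicate'' (both opposite vertices at infinity) you explicitly defer to ``a more careful argument'' that is never given. Even in the easier case, knowing that $u_1$ is red does not place $u_1$ in $\Fin_{\rho_\infty(\South)}$, so the collinearity contradiction you invoke does not fire.

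The paper closes exactly this gap by a different mechanism: instead of rerouting inside the original $G$, it \emph{changes the triangulation} by flipping every edge in $\Econst$ whose endpoints are ``red-achievable''. The crucial technical point is \Cref{lemma:same_rho}: by passing to the augmented graph $\Gaug$ one secures a single $\rho_\infty$ that lies in $Y'$ for \emph{every} such flipped graph $G'$, so \cite[Lemma~2.9]{Gallet2021} applies uniformly and red-achievable vertices are automatically in $\Fin_{\rho_\infty(\South)}$ (\Cref{cor:red_achievable}). Then \Cref{lemma:red_const_edge}---which is precisely the non-degeneracy argument you sketch, but now legitimate because red-achievability \emph{does} give $\Fin$-membership---shows that the opposite vertices of any edge in $\Ered$ are not red-achievable. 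After flipping all of $\Ered$ at once, the red walk in the resulting $G'$ contains no edge of $\Econst$ and, by the same lemma, uses only edges already present in $G$. The cycle is then extracted from this red walk, so $\Fin$-membership is built in from the start rather than having to be restored after a detour. Your cluster picture is morally compatible with this, but the flip-and-recompute-the-walk step is what actually replaces the missing ``more careful argument''.
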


	To prove \Cref{proposition:avoid}, we need to have better control on the constructions we made in \cite{Gallet2021}.
	In doing this, we employ the following notation, which is compatible with the one in \cite[Section 2]{Gallet2021} and in \Cref{preliminaries}:
	\begin{itemize}
	\item let $\rho_0$ be the realization of~$G=(V,E)$ that admits a flex~$f$;
	\item let $\Econst$ be the set of all edges in $E$ whose dihedral angle does not change along the flex~$f$;
	\item let $\{\vertOne,\vertTwo\}$ be an edge for which the dihedral angle is not constant, 
	and let $\South$ and $\North$ be the two opposite vertices of the two triangles containing~$\{\vertOne,\vertTwo\}$.
	\end{itemize}
	The idea is that if an edge belongs to~$\Econst$, then we can locally modify the triangulation of the polyhedron, 
	without changing the set of realizations, so that the edge is avoided by the cycle we construct.
	To do this, we introduce the operation of a \emph{flip}.
	
	\begin{definition}
		\label{definition:flip}
		Let $H=(V_H,E_H)$ be the 1-skeleton of a triangular polyhedron.
		Let $\{v_1,v_2\} \in E_H$ and let $u_1,u_2\in V_H$ be the two opposite
		vertices of the two triangles in~$H$ containing the edge~$\{v_1,v_2\}$ as in \Cref{figure:flip}.
		The \emph{flip of $H$ on the edge}~$\{v_1,v_2\}$ is the $1$-skeleton obtained from $H$
		by replacing the edge~$\{v_1,v_2\}$ by~$\{u_1,u_2\}$, 
		and leaving everything else unchanged.
		Once a sequence of edges is fixed,
		we speak of a flip on the sequence by iteratively applying flips.
		If the sequence is empty, then the flip is the graph itself.
	\end{definition}
	
\begin{figure}[H]
    \centering
	\begin{tikzpicture}[]
        \begin{scope}
            \node[vertex,label={[labelsty]above:$v_1$}] (v1) at (0,1) {};
            \node[vertex,label={[labelsty]below:$v_2$}] (v2) at (0,-1) {};
            \node[vertex,label={[labelsty]left:$u_1$}] (u1) at (-1,0) {};
            \node[vertex,label={[labelsty]right:$u_2$}] (u2) at (1,0) {};
            
            \draw[edge] (v1)edge(v2) (v2)edge(u1) (u1)edge(v1);
            \draw[edge] (v1)edge(v2) (v2)edge(u2) (u2)edge(v1);
        \end{scope}
        \begin{scope}[xshift=3cm]
            \node[] (o) at (0,0) {$\stackrel{\mathrm{flip}}{\resizebox{0.8cm}{!}{$\rightsquigarrow$}}$};
        \end{scope}
        \begin{scope}[xshift=6cm]
            \node[vertex,label={[labelsty]above:$v_1$}] (v1) at (0,1) {};
            \node[vertex,label={[labelsty]below:$v_2$}] (v2) at (0,-1) {};
            \node[vertex,label={[labelsty]left:$u_1$}] (u1) at (-1,0) {};
            \node[vertex,label={[labelsty]right:$u_2$}] (u2) at (1,0) {};
            
            \draw[edge] (u1)edge(u2) (u2)edge(v1) (v1)edge(u1);
            \draw[edge] (u1)edge(u2) (u2)edge(v2) (v2)edge(u1);
        \end{scope}
	\end{tikzpicture}
	\caption{The flip on the edge $\{v_1,v_2\}$.}
	\label{figure:flip}
\end{figure}
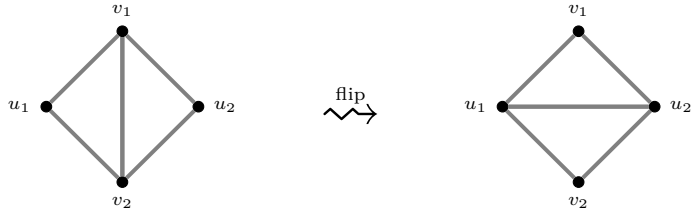

	To illustrate the idea of the proof, suppose that $\{v_1, v_2\} \in \Econst$ and
	let $G'$ be the flip of~$G$ on the edge~$\{v_1,v_2\}$.
	The realization~$\rho_0$ is a realization of~$G'$ as well.
	Moreover, the assumption that the dihedral angle at $\{v_1,v_2\}$ is constant along the flex
	guarantees that the flex~$f$ of~$(G,\rho_0)$ is also a flex of~$(G',\rho_0)$.
	In particular,
	we can obtain the \emph{same} element~$\rho_\infty$, as in \Cref{preliminaries}, for both~$G$ and~$G'$ 
	and hence the same coloring of vertices.
	Clearly, the edge sets of~$G$ and~$G'$ are different and so are the triangles with red and blue vertices.
	Hence, we may get different red and blue walks.
	If the edge $\{v_1,v_2\}$ is in the red walk in~$G$, and hence
	possibly in the zero-sum cycle, we can flip to $G'$, where $v_1$ and $v_2$ are non-adjacent,
	and obtain a red walk avoiding $\{v_1,v_2\}$.  
	
	We start to give more details following the blueprint we have just presented.
	The first aim is to construct an element~$\rho_\infty$ valid for all flips on edges in $\Econst$ (\Cref{lemma:same_rho}).
	In order to do that, we augment the graph $G=(V,E)$ with edges between pairs of vertices
	whose distance is fixed in the flex.
	
	\begin{definition}
	Let $\Gaug$ be the graph with the vertex set~$V$ and edge set 
	\begin{equation*}
		\Eaug := 
		E \cup 
		\big\{
			\{u,v\} \,\mid\, u, v\in V, \, u\neq v \text{ and } \|\rho(u)-\rho(v)\| \text{ is constant along } f
		\big\}\,.
	\end{equation*}
	\end{definition}
	
	\begin{lemma}
		\label{lemma:properties_flip}
		Let $G'$ be the flip of~$G$ on any sequence of edges in~$\Econst$.
		Then 
		\begin{enumerate}
		\renewcommand{\theenumi}{(\alph{enumi})}
		\renewcommand{\labelenumi}{\theenumi}
		\item $\rho_0$ is a realization of~$G'$;\label{enum:realization}
		\item $f$ is a flex of $(G', \rho_0)$;\label{enum:flex}
		\item the dihedral angle at any edge in~$\Econst$ that is an edge in~$G'$ is constant along~$f$;\label{enum:constant_angle}
		\item $G'$ is a subgraph of $\Gaug$.\label{enum:subgraph}
		\end{enumerate}
	\end{lemma}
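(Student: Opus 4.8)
The plan is to reduce everything to a single flip and then induct on the length of the sequence. Write $G'$ for the result of flipping $G$ on a single edge $\{v_1,v_2\}\in\Econst$, with $u_1,u_2$ the two opposite vertices as in \Cref{definition:flip}. The empty sequence is trivial, and the inductive step follows once we verify that a single flip preserves the four listed properties together with the standing hypotheses (triangular polyhedron, non-degenerate faces, flex~$f$, constant dihedral angle on the surviving edges of $\Econst$) needed to perform the next flip. The crux is a rigidity observation: among the four vertices $v_1,v_2,u_1,u_2$ the five lengths $\lambda_{\{v_1,v_2\}},\lambda_{\{v_1,u_1\}},\lambda_{\{v_2,u_1\}},\lambda_{\{v_1,u_2\}},\lambda_{\{v_2,u_2\}}$ are preserved by~$f$, and by hypothesis the dihedral angle at $\{v_1,v_2\}$ is constant; since the two faces at $\{v_1,v_2\}$ are non-degenerate, these data rigidly determine the configuration of $\{v_1,v_2,u_1,u_2\}$ up to isometry, so the distance $\|\rho(u_1)-\rho(u_2)\|$ is constant along~$f$ and strictly positive.

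From this single fact, properties \ref{enum:realization}, \ref{enum:flex}, and \ref{enum:subgraph} are immediate. For \ref{enum:realization}, the only edge of $G'$ not already in $G$ is $\{u_1,u_2\}$, and positivity of the constant distance gives $\rho_0(u_1)\neq\rho_0(u_2)$. For \ref{enum:flex}, the conditions $f(0)=\rho_0$ and pairwise non-congruence of the $f(t)$ are properties of $f$ as a map $V\to\R^3$ and do not depend on the edge set, the edge-length condition holds for the unchanged edges because $f$ is a flex of~$G$, and for $\{u_1,u_2\}$ by the rigidity observation. For \ref{enum:subgraph}, the unchanged edges lie in $E\subseteq\Eaug$, while $\{u_1,u_2\}$ lies in $\Eaug$ precisely because its length is constant along~$f$.

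The main work is \ref{enum:constant_angle}, and this is the step I expect to be the obstacle. Let $e\in\Econst$ be an edge of~$G'$. If $e$ is none of the four boundary edges $\{v_1,u_1\},\{u_1,v_2\},\{v_2,u_2\},\{u_2,v_1\}$ of the flipped region, its two incident faces are unchanged and its dihedral angle is literally the same function of~$t$ as before, hence constant. The interesting case is a boundary edge, say $e=\{v_1,u_1\}$, whose face $\{v_1,v_2,u_1\}$ in $G$ is replaced by $\{v_1,u_1,u_2\}$ in~$G'$ while its other, external, face $\{v_1,u_1,w\}$ is untouched. Here I use that the dihedral angle at an edge is constant if and only if the distance between the two opposite vertices is constant (the six mutual distances of the four involved points determine the angle, and five of them are fixed edge lengths). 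In $G$ the opposite vertices at~$e$ are $v_2$ and~$w$, so $\|\rho(v_2)-\rho(w)\|$ is constant because $e\in\Econst$; in $G'$ they are $u_2$ and~$w$, so it remains to show $\|\rho(u_2)-\rho(w)\|$ is constant. Now $w$ has constant distances to the three vertices $v_1,u_1,v_2$ (two are edge lengths, the third is the constant just noted), and these three points are non-collinear by non-degeneracy; hence $w$ is rigidly attached to the rigid body $\{v_1,v_2,u_1,u_2\}$, and its distance to~$u_2$ is constant as well. The remaining boundary edges are handled symmetrically. This transfer of constancy across the flip is exactly where the non-degeneracy assumption is essential.

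Finally, to close the induction I observe that a flip is a local combinatorial move that again yields the $1$-skeleton of a triangular polyhedron, and that by \ref{enum:constant_angle} every surviving edge of $\Econst$ still has constant dihedral angle; it remains only to record that the two newly created faces $\{v_1,u_1,u_2\}$ and $\{v_2,u_1,u_2\}$ are non-degenerate, so that the rigidity observation applies to the next flip. With the hypotheses thus restored, the next flip in the sequence may be performed, completing the induction.
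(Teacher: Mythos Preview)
Your argument follows the paper's own proof closely: induction on the number of flips, the rigidity observation that a constant dihedral angle together with the five fixed edge lengths pins down the sixth distance, and the boundary-edge analysis for \ref{enum:constant_angle} (the paper treats the edge $\{v_1,u_2\}$ where you treat $\{v_1,u_1\}$, which is merely a relabeling). One minor caveat: the claim in your final paragraph that the two new faces $\{v_1,u_1,u_2\}$ and $\{v_2,u_1,u_2\}$ are non-degenerate is not automatic---a planar configuration with $v_1,u_1,u_2$ collinear gives a counterexample---so it cannot simply be ``recorded''; the paper does not carry face non-degeneracy through its induction at all, and the downstream uses of the lemma require only \ref{enum:flex} and \ref{enum:subgraph}, which your argument already establishes without that extra hypothesis.
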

	\begin{proof}
		The fact that $\rho_0$ is a realization of~$G'$ follows immediately from the fact that the vertices of~$G$ and~$G'$ are the same.
		We now show the last three properties by induction on the number of flips.  
		The base case of the induction (no flips) is trivial. 
		Suppose we performed flips on $e_1, \dotsc, e_k \in \Econst$ (in this order), obtaining $G'_{k}$, 
		and now we perform a further flip on $e_{k+1} = \{v_1, v_2\} \in \Econst$, obtaining~$G'_{k+1}$. 
		In particular, this means that $e_{k+1}$ is an edge in~$G'_{k}$. 
		Let $u_1, u_2$ be the two opposite vertices in the two triangles in~$G'_{k}$ containing~$e_{k+1}$. 
		Notice that to show that $f$ is a flex of $(G'_{k+1}, \rho_0)$, 
		we only have to prove that all realizations of~$f$ determine the same lengths of the edge~$\{u_1, u_2\}$. 
		By inductive assumption, we know that $f$ is a flex of~$(G'_{k}, \rho_0)$ and that the dihedral angle at~$e_{k+1}$ is constant along~$f$. 
		Then, we know that the distance between~$u_1$ and~$u_2$ is constant along~$f$.
		Therefore, $f$ is a flex of~$G'_{k+1}$ and $G'_{k+1}$ is a subgraph of~$\Gaug$.
		We are left to show that all edges in~$\Econst$ that are also edges of~$G'_{k+1}$ have constant dihedral angle along~$f$. 
		The induction hypothesis covers all those edges that are not in the $4$-cycle $(v_1, u_1, v_2, u_2, v_1)$. 
		Without loss of generality, it is enough to prove that the property holds for~$\{v_1, u_2\}$.
		Let $v$ be the vertex such that $v$ and $v_2$ are the opposite vertices
		of the two triangles in $G'_{k}$ containing $\{v_1, u_2\}$.
		Since the dihedral angles at $\{v_1, v_2\}$ and $\{v_1, u_2\}$ are constant
		along the flex~$f$ of~$(G'_{k},\rho_0)$,
		the distance between $u_1$ and $v$ is constant along the flex $f$.
		Hence, the dihedral angle between the faces $\{v_1,u_2,u_1\}$ and $\{v_1,u_2,v\}$ in~$G'_{k+1}$ is constant along the flex~$f$ of~$(G'_{k+1},\rho_0)$.
	\end{proof}
	
	\begin{lemma}
		\label{lemma:same_rho}
		There exists an element $\rho_{\infty} \in \Moebius^V$ such that  
		for any flip~$G'$ of~$G$ on any sequence of edges in~$\Econst$,
		we have that $\rho_{\infty} \in Y'$ and $\rho_{\infty}(\South) \in \Moebius_{\infty}$,
		where $Y'$ is the subvariety of~$\Moebius^V$ obtained from~$G'$ and~$\rho_0$ as in \Cref{preliminaries}.
	\end{lemma}
	\begin{proof}
		Let $\Waug$ be the set of realizations $\rho \colon V \longrightarrow \C^3$ such that 
		\[
			(x_u - x_v)^2 + (y_u - y_v)^2 + (z_u - z_v)^2 = 
			\left\| \rho_0(u) - \rho_0(v) \right\|^2
			\text{ for all } \{u, v\} \text{ in } \Eaug \,,
		\]
		where $(x_{u}, y_{u}, z_{u})$ are the coordinates of the point~$\rho(u)$, and similarly for~$\rho(v)$.
		Pick any flip~$G'$ of~$G$ on any sequence of edges in~$\Econst$.
		Let $W'$ be the algebraic set constructed starting from~$G'$ as in \Cref{preliminaries}.
		From \Cref{lemma:properties_flip}\ref{enum:subgraph}, we know that $\Waug \subset W'$.
		
		Starting from $\Waug$, we can construct $\Zaug \subset Z'$ and $\Yaug \subset Y'$
		as in \Cref{preliminaries} since $\rho_0$ is a realization of~$\Gaug$.
		Because of the definition of~$\Waug$ and the existence of the flex~$f$,
		the projection~$\Yaug_\South$ of~$\Yaug$ on the copy of~$\Moebius$ indexed by the vertex~$\South$
		is still positive-dimensional; in fact, the extra constraints of~$\Waug$ are introduced only for pairs of vertices
		whose distance is constant during the flex.

		Therefore, we can pick an element~$\rho_{\infty}$ in~$\Yaug$ so that $\rho_{\infty}(\South) \in \Moebius_{\infty}$.
		This element~$\rho_{\infty}$ satisfies then the requirements of the statement.
	\end{proof}

	We use $\rho_{\infty}$ from \Cref{lemma:same_rho} to color the vertices of~$G$ according to \Cref{preliminaries}.
	For any flip $G'$ of~$G$ on a sequence of edges in~$\Econst$,
	we get the blue and red walk in $G'$ via~$\rho_\infty$.
	
	\begin{definition}
		A red vertex~$v$ of~$G$ is called \emph{red-achievable} 
		if there exists a flip~$G'$ of~$G$ on a sequence of edges in~$\Econst$
		such that $v$ is in the red walk in~$G'$ via~$\rho_\infty$.
	\end{definition}
	By \cite[Lemma 2.9]{Gallet2021}, we get the following statement.
	\begin{corollary}
		\label{cor:red_achievable}
		If $v$ is a red-achievable vertex of $G$,
		then $\rho_\infty(v)\in \Fin_{\rho_{\infty}(\South)}$.
	\end{corollary}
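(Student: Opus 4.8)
The plan is to reduce the statement directly to \cite[Lemma 2.9]{Gallet2021}, whose conclusion is precisely that every vertex lying in the red walk (computed via a valid element at infinity) is mapped by that element into $\Fin_{\rho_\infty(\South)}$. The only thing to verify is that the hypotheses of that lemma are available not just for $G$ but for an arbitrary flip $G'$ on a sequence of edges in $\Econst$, using the \emph{single} element $\rho_\infty$ produced in \Cref{lemma:same_rho}.

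First I would unwind the definition of red-achievability: if $v$ is red-achievable, then by definition there is a flip $G'$ of $G$ on some sequence of edges in $\Econst$ such that $v$ belongs to the red walk of $G'$ computed via $\rho_\infty$. Next I would invoke \Cref{lemma:same_rho} to guarantee that this same $\rho_\infty$ is a legitimate element of $Y'$ satisfying $\rho_\infty(\South) \in \Moebius_\infty$; this is exactly the input that the construction recalled in \Cref{preliminaries} needs in order to define the coloring and the resulting red walk for $G'$, and hence to apply \cite[Lemma 2.9]{Gallet2021}.

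With these two ingredients in place, the conclusion follows immediately: applying \cite[Lemma 2.9]{Gallet2021} to the graph $G'$ and the element $\rho_\infty$ gives $\rho_\infty(v)\in \Fin_{\rho_\infty(\South)}$, since $v$ lies in the red walk of $G'$, which is what we wanted to show.

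I do not expect a genuine obstacle in the argument. The only point requiring care is that \cite[Lemma 2.9]{Gallet2021} is stated for a fixed triangular polyhedron together with its own element at infinity, so I would emphasise that \Cref{lemma:same_rho} is doing the real work: it certifies that the single $\rho_\infty$ serves simultaneously as such an element for every flip $G'$. Everything else---in particular the set $\Fin_{\rho_\infty(\South)}$---depends only on $\rho_\infty(\South)$, which is the same across all flips, so no compatibility issue between different flips can arise.
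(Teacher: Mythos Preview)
Your proposal is correct and matches the paper's own argument: the paper simply records that the corollary follows from \cite[Lemma 2.9]{Gallet2021}, which is exactly the reduction you describe after unwinding the definition of red-achievability and using \Cref{lemma:same_rho} to ensure $\rho_\infty$ is valid for the flipped graph $G'$. Your write-up just makes explicit the steps the paper leaves implicit.
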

	
	The last tool we need to prove \Cref{proposition:avoid} is a result that ensures
	that performing flips allows us to get rid of edges with constant dihedral angles
	in the zero-sum cycle we are looking for.
	
	\begin{lemma}
		\label{lemma:red_const_edge}
		Let $\{v_1,v_2\}$ be an edge in $\Econst$.
		Let $u_1,u_2$ be the two opposite vertices of 
		the two triangles in $G$ containing
		the edge $\{v_1,v_2\}$.
		If $v_1$ and $v_2$ are red-achievable,
		then neither $u_1$ nor $u_2$ is red-achievable.
	\end{lemma}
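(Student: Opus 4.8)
The plan is to argue by contradiction, exploiting the one-dimensional behaviour of distances on $\Fin_{\rho_{\infty}(\South)}$ that is packaged in \Cref{lemma:cycle_fin}. The starting observation is that the three vertices $v_1$, $v_2$, $u_1$ (and likewise $v_1$, $v_2$, $u_2$) are the vertices of a genuine triangular face of~$G$: since $u_1$ is an opposite vertex of one of the two triangles containing $\{v_1,v_2\}$, all three pairs $\{v_1,v_2\}$, $\{v_2,u_1\}$, $\{u_1,v_1\}$ are edges of~$G$ and therefore carry well-defined flex-induced lengths. This is exactly what will allow me to feed the triple into \Cref{lemma:cycle_fin}.

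First I would assume that $v_1$ and $v_2$ are red-achievable and, toward a contradiction, that $u_1$ is red-achievable as well (the case of $u_2$ being symmetric). By \Cref{cor:red_achievable}, every red-achievable vertex $v$ satisfies $\rho_{\infty}(v)\in \Fin_{\rho_{\infty}(\South)}$; hence all three of $\rho_{\infty}(v_1)$, $\rho_{\infty}(v_2)$, $\rho_{\infty}(u_1)$ lie in $\Fin_{\rho_{\infty}(\South)}$. I then apply \Cref{lemma:cycle_fin} to the $3$-cycle $(v_1, v_2, u_1, v_1)$, obtaining signs $\eta_1,\eta_2,\eta_3\in\{-1,1\}$ with
\[
 \eta_1\, \lambda_{\{v_1,v_2\}} + \eta_2\, \lambda_{\{v_2,u_1\}} + \eta_3\, \lambda_{\{u_1,v_1\}} = 0 \,.
\]

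The final step is to extract a contradiction from this relation. Since the three edge lengths are strictly positive, the signs cannot all agree, so after possibly flipping all of them exactly two equal $+1$ and one equals $-1$; rearranging, one of the lengths equals the sum of the other two. This is precisely the equality case of the triangle inequality, meaning that the face $\{v_1,v_2,u_1\}$ is degenerate, contradicting the standing assumption that every triangle in the realization is non-degenerate. Therefore $u_1$ cannot be red-achievable, and the same argument applied to the face $\{v_1,v_2,u_2\}$ rules out $u_2$.

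I expect the only delicate point to be the bookkeeping that makes \Cref{lemma:cycle_fin} applicable: confirming that $\{v_1,v_2,u_1\}$ really is a face of~$G$, so that the three quantities entering the zero-sum are honest flex-induced edge lengths, and noting that ``red-achievable'' already presupposes that the vertex is red, so that \Cref{cor:red_achievable} indeed applies to it. Once these points are in place, the contradiction is the elementary sign analysis above, with the non-degeneracy hypothesis entering at exactly this last step.
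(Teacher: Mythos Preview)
Your argument is correct and is exactly the approach taken in the paper: assume some $u_i$ is red-achievable, use \Cref{cor:red_achievable} to place $\rho_\infty(v_1),\rho_\infty(v_2),\rho_\infty(u_i)$ in $\Fin_{\rho_\infty(\South)}$, apply \Cref{lemma:cycle_fin} to the face $3$-cycle, and read off that the triangle is degenerate, contradicting the standing non-degeneracy assumption of \Cref{proposition:avoid}. You have simply unpacked the sign analysis and the triangle-inequality equality case that the paper's one-line proof leaves implicit.
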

	\begin{proof}
		Suppose that $u_i$ is red-achievable.
		By \Cref{cor:red_achievable} and \Cref{lemma:cycle_fin}, the triangle $\{v_1,v_2,u_i\}$ in $G$
		would be degenerate in the realizations of the flex~$f$, which contradicts the assumption of \Cref{proposition:avoid}. 
	\end{proof}
	
\begin{proof}[\Cref{proposition:avoid}]
	Our goal is to find a cycle of $G$ containing $\{\vertOne,\vertTwo\}$
	such that none of its edges is in $\Econst$
	and for each of its vertices~$v$ we have $\rho_\infty(v)\in \Fin_{\rho_{\infty}(\South)}$.
	Then the statement follows from \Cref{lemma:cycle_fin}.
	We do so using flips.
	
	Let $\Ered$ be the set of all edges $\{v_1,v_2\}\in \Econst$
	such that $v_1$ and $v_2$ are red-achievable.
	By \Cref{lemma:red_const_edge}, every edge $\{v_1,v_2\}\in \Ered$
	is a diagonal of a 4-cycle in~$G$ consisting of edges avoiding $\Ered$.
	This implies that there is a unique graph~$G'$ that is the flip
	of $G$ on any permutation of the edges in $\Ered$.
	None of the edges in~$\Ered$ is an edge of $G'$ and
	no edge introduced by the flips has vertices that are both red-achievable by \Cref{lemma:red_const_edge}.
	Let $R'$ be the red walk in $G'$ via $\rho_\infty$ constructed as in \Cref{preliminaries}.
	Since all vertices of $R'$ are red-achievable by definition,
	no edge in~$\Econst$ is in $R'$
	and all edges of $R'$ are edges of~$G$.
	Following the argument in \cite[Lemma 2.10]{Gallet2021}, there is a cycle in $G'$ containing $\{ \vertOne, \vertTwo \}$ 
	such that all its edges are in $R'$, i.e., it is a cycle in $G$.
	By \Cref{lemma:cycle_fin}, it is a zero-sum cycle.
\end{proof}

This stronger result allows us to extend the result of \cite{Gallet2021}
to polyhedra whose faces are not necessarily triangles.
The notion of realization extends immediately to this new setting.
The notion of flex, instead, requires a little bit of care:
what we are interested in here are, in fact, flexes for which 
the faces \emph{do not change their shapes}.
Notice that, by a well-know theorem of Cauchy, 
with this notion of flex convex realizations of polyhedra do not admit flexes.

\begin{definition}
	Let $H = (V,E,F)$ be the $2$-skeleton of a polyhedron,
	namely, $(V,E)$ is a graph and $F$ is a set faces, i.e., cycles in $(V,E)$,
	such that every edge in $E$ is in exactly two faces. 
	A \emph{realization} of~$H$ is a map $\rho \colon V \longrightarrow \R^3$ 
	such that $\rho(u) \neq \rho(v)$ for every $\{u, v\} \in E$.
	A \emph{flex} of a realization~$\rho$ of~$H$ is a continuous map $f \colon [0,1) \longrightarrow (\R^3)^V$ such that
	\begin{itemize}
	 \item $f(0)$ is the given realization~$\rho$;
	 \item for any $t \in [0,1)$ and for every face~$g \in F$, the images of~$g$ under the realizations~$f(0)$ and~$f(t)$ are congruent;
	 \item for any two distinct $t_1, t_2 \in [0,1)$, the realizations $f(t_1)$ and $f(t_2)$ are not congruent.
	\end{itemize}
\end{definition}

\begin{theorem}
	\label{theorem:nontriangular}
	Let $H$ be the $2$-skeleton of a polyhedron with a realization that admits a flex.
	Suppose that there is a triangulation of the faces of the polyhedron such that
	the vertex set of the triangulation is the same as the one of $H$ and
	all triangles in the realization are non-degenerate, i.e., the vertices of each triangle are non-collinear.
	Then, for every edge of~$H$ whose two incident faces change their relative position along the flex,
	there is a cycle in~$H$ containing that edge and
	a sign assignment such that the signed sum of the lengths of the edges in the cycle is zero.
	Moreover, the two incident faces of every edge in the cycle change their relative position along the flex.
\end{theorem}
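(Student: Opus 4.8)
The plan is to deduce the non-triangular statement from \Cref{proposition:avoid} by triangulating the faces of~$H$. Fix the triangulation provided by the hypothesis: it has the same vertices as~$H$, subdivides each face of~$H$, and realizes every triangle non-degenerately under the initial realization~$\rho_0$. Let $G$ be the $1$-skeleton of the resulting triangular polyhedron; its edges are the original edges of~$H$ together with the \emph{diagonal} edges added inside the faces. The first step is to verify that the given flex~$f$ of~$(H,\rho_0)$ is a flex of~$(G,\rho_0)$ in the triangular sense. Since $f$ keeps the image of each face congruent to its initial position, and every diagonal edge joins two vertices lying in a single face, each diagonal length is constant along~$f$; as the original edge lengths are constant as well, $f$ preserves all edge lengths of~$G$. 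The non-congruence requirement at distinct times is identical for~$H$ and~$G$, hence inherited. Thus $(G,\rho_0)$ meets all hypotheses of \Cref{proposition:avoid}.

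The heart of the reduction is a dictionary between dihedral angles in~$G$ and the relative motion of faces in~$H$, which I would isolate as a short lemma. For a diagonal edge, the two adjacent triangles of~$G$ lie inside one and the same face of~$H$; as that face moves rigidly along~$f$, their relative position---and hence the dihedral angle at the diagonal---is constant, so every diagonal edge belongs to~$\Econst$ for~$G$. For an original edge~$e$ of~$H$, the two triangles of~$G$ adjacent to~$e$ are rigidly attached to the two distinct faces incident to~$e$; these two faces share the endpoints of~$e$, so their relative freedom is a rotation about the line through~$e$, and the dihedral angle at~$e$ in~$G$ faithfully records this rotation (here non-degeneracy of the adjacent triangles guarantees the angle is well defined). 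Consequently the dihedral angle at~$e$ in~$G$ is non-constant along~$f$ if and only if the two faces of~$H$ incident to~$e$ change their relative position.

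Granting this dictionary, the theorem follows quickly. Let $e$ be an edge of~$H$ whose two incident faces change their relative position; then the dihedral angle at~$e$ in~$G$ is not constant along~$f$. Applying \Cref{proposition:avoid} to~$(G,\rho_0)$ and the edge~$e$ yields a cycle in~$G$ through~$e$ together with a sign assignment for which the signed sum of the edge lengths vanishes, and with the additional property that every edge of the cycle has non-constant dihedral angle. By the dictionary no such edge can be a diagonal, because diagonals lie in~$\Econst$; hence every edge of the cycle is an original edge of~$H$, so the cycle is a cycle of~$H$, and the same signs and lengths exhibit it as a zero-sum cycle through~$e$. Moreover, each of its edges is an original edge with non-constant dihedral angle, so, again by the dictionary, the two faces incident to it change their relative position, which is exactly the final clause of the statement.

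I expect the only real obstacle to be pinning down the dictionary, and in particular its backward direction for original edges: one must argue that constancy of the dihedral angle at~$e$ in~$G$ forces the two rigid faces meeting at~$e$ to maintain a fixed relative position. This rests on the elementary fact that two rigid bodies sharing the two endpoints of~$e$ can only rotate about the line through~$e$, so that this one-parameter relative motion is detected by the dihedral angle of the non-degenerate triangles abutting~$e$. Everything else is straightforward bookkeeping resting on \Cref{proposition:avoid} and \Cref{lemma:cycle_fin}.
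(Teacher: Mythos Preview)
Your proposal is correct and follows essentially the same approach as the paper: triangulate, observe that the given flex is a flex of the triangulation, note that diagonal edges lie in~$\Econst$, apply \Cref{proposition:avoid}, and read the resulting cycle back in~$H$. You are more explicit than the paper about the ``dictionary'' between dihedral angles in~$G$ and relative face motion in~$H$ (the paper simply asserts that diagonals have constant dihedral angle and that the cycle fulfills the requirements), but the strategy and logical structure are the same.
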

\begin{proof}
	Let $G$ be the $1$-skeleton of the triangulation of the polyhedron in the statement.
	Let $f$ be the flex of~$H$. By construction, $f$ is also a flex of~$G$.
	Now, $G$ and $f$ satisfy the assumptions of \Cref{proposition:avoid}.
	Moreover, by construction all edges in~$G$ that are diagonals of faces of~$H$ have constant dihedral angle along the flex.
	Thus, the cycle provided by \Cref{proposition:avoid} consists only of edges in~$H$
	and it can be chosen so that the dihedral angles of all its edges change along the flex.
	Hence, this cycle fulfills the requirements of the statement.
\end{proof}

\paragraph{Acknowledgments.}

Josef Schicho and Jan Legersk\'y have been supported by the Austrian Science Fund (FWF): P31061.
Jan Legersk\'y has been supported by the Ministry of Education, Youth and Sports of the Czech Republic, project no. CZ.02.1.01/0.0/ 0.0/16\_019/0000778.
Matteo Gallet has been supported by the Austrian Science Fund (FWF): Erwin Schr\"odinger Fellowship J4253.
Georg Grasegger has been supported by the Austrian Science Fund (FWF): P31888.

\bigskip
\bigskip

\textsc{(MG, GG) Johann Radon Institute for Computation and Applied Mathematics (RICAM), Austrian
Academy of Sciences, Austria}\\
Email address: \texttt{matteo.gallet@ricam.oeaw.ac.at}, \\
\phantom{Email address: }\texttt{georg.grasegger@ricam.oeaw.ac.at}

\textsc{(JL) Department of Applied Mathematics, Faculty of Information Technology, Czech Technical University in Prague, Czech Republic}\\
Email address: \texttt{jan.legersky@fit.cvut.cz}

\textsc{(JS) Johannes Kepler University Linz, Research Institute for Symbolic Computation (RISC), Austria}\\
Email address: \texttt{jschicho@risc.jku.at}

\end{document}